\newtheorem{theorem}{Proposition}
\begin{document}
\title{A concise, approximate representation of a collection of loads described by polytopes}
\author{Suhail Barot~\IEEEmembership{Student Member, IEEE} and Josh A. Taylor~\IEEEmembership{Member, IEEE}
\thanks{This work was supported by the Natural Sciences and Engineering Research Council of Canada. S. Barot and J. A. Taylor are with the Department of Electrical and Computer Engineering, University of Toronto, Toronto, Ontario, M5S 3G4, Canada ({\tt\small suhail.barot@mail.utoronto.ca, josh.taylor@utoronto.ca}).}
}
\maketitle

\begin{abstract}
Aggregations of flexible loads can provide several power system services through demand response programs, for example load shifting and curtailment. The capabilities of demand response should therefore be represented in system operators' planning and operational routines. However, incorporating models of every load in an aggregation into these routines could compromise their tractability by adding exorbitant numbers of new variables and constraints.

In this paper, we propose a novel approximation for concisely representing the capabilities of a heterogeneous aggregation of flexible loads. We assume that each load is mathematically described by a convex polytope, i.e., a set of linear constraints, a class which includes deferrable loads, thermostatically controlled loads, and generic energy storage. The set-wise sum of the loads is the Minkowski sum, which is in general computationally intractable. Our representation is an outer approximation of the Minkowski sum. The new approximation is easily computable and only uses one variable per time period corresponding to the aggregation's net power usage. Theoretical and numerical results indicate that the approximation is accurate for broad classes of loads.
\end{abstract}

\begin{IEEEkeywords}
Demand response, load aggregation, Minkowski sum, polytope, linear programming
\end{IEEEkeywords}

\section{Introduction}
Demand response (DR), the coordinated control of collections of flexible loads, can render great benefits to power systems and is recognized as an essential new source of flexibility for renewable integration~\cite{fercDRAM}. DR activities are now widely engaged in by third party companies companies, utilities, and system operators; recently the PJM system operator reported that its DR program saved over \$$650$ million during a single week in August 2013~\cite{nytimes_drsave}. Comprehensive surveys on DR are provided by~\cite{callaway11EL,dietrich2011side,siano2014demand}. In this paper, we refer to the entity controlling a collection of loads as a load aggregator.

System operators must integrate DR into their operational routines to fully leverage its capabilities. For example, multiperiod optimal power flow or unit commitment can be used to perform load-shifting using DR alongside energy storage~\cite{kirschen2009quant,khodaei2011SCUC}. This is challenging because the loads in DR programs are often small, diverse, and numerous; a typical aggregation may contain upwards of $10^6$ loads. Exactly representing the loads of multiple DR aggregations within multiperiod optimal power flow could add millions of new variables and constraints, making it computationally intractable~\cite{Oren_RE_deferrableloads,hao_thermostatic,nayyar_taylor}. Moreover, the individual load models may be known to the load aggregator but not the system operator.

To overcome these difficulties, load aggregators need concise models of their loads' aggregate characteristics, thus enabling them to share their capabilities with the system operator without describing every load individually. System operators can then straightforwardly incorporate such a model into tasks like multiperiod optimal power flow or unit commitment as they would a conventional resource like grid-scale storage. Because the model is concise, i.e., consisting of a small number of variables and constraints, it does not increase the difficulty of the system operator's tasks. We further discuss the role of DR and concise modeling within multiperiod optimal power flow in Section~\ref{sec:role}.

In this paper, we develop a concise, approximate representation for aggregations of loads modeled by convex polytopes, i.e., sets of linear constraints. Since we only deal with convex polytopes, we will henceforth omit the term `convex' and simply write `polytope'. The set-wise sum of two sets is called the Minkowski sum, and is computationally intractable even for polytopes. As observed in~\cite{hao_thermostatic,alizadeh_loadmodels}, the flexibility of an aggregation of polytopic loads is captured by the Minkowski sum, which we define in Section~\ref{sec: Msum}. Approximate Minkowski sums are an active research area, but most work focuses on the calculation of two and three-dimensional sums of highly complex polytopes as in~\cite{Agarwal_2DMinkowski} and~\cite{Varadhan_3DMinkowski}. In Section~\ref{sec:gen_polytopes}, we develop a novel outer approximation of the Minkowski sum, which is easily computable in polynomial-time. Our method is generally applicable regardless of dimension, and also results in a polytope in $\mathbb{R}^D$, where $D$ is the number of time periods. This makes it easy to incorporate into optimization routines for power system operations without sacrificing tractability.

A number of existing papers describe techniques for concisely modeling large collections of loads, which we now summarize. Work on this topic has been on-going since the 1980s beginning with~\cite{chong_loadmodel_1985} and more recently in~\cite{Callaway_wind,amgarcia_prob_draggr,fathy_robust}, which model the probability distribution of temperatures in spaces controlled by thermostatic loads using a partial differential equation. Thermostatic loads are a particular focus area within DR work as they represent almost 20\% of load in industrialized countries such as the U.S.~\cite{eere_bldgs}. In~\cite{perfumo_kofman_ward}, the authors model the control of a collection of thermostatic loads using a second-order LTI system and design a controller to achieve desired power outputs and then return the aggregate system to steady-state.

Our work is closely related to several recent papers that approximate a collection of loads as generalized energy storage. In~\cite{nayyar_taylor}, charging electric vehicles are modeled as deferrable loads, and analytical generalized storage expressions for their aggregate capabilities are obtained. In~\cite{alizadeh_loadmodels}, many types of loads are clustered and aggregated using generalized battery models whose parameters are found by summing over the loads in a cluster. Load aggregations are approximated as time-varying thermal batteries in~\cite{mathieu2013energy,mathieu2015arbitraging} and as generalized batteries in~\cite{hao_thermostatic}; the latter derives inner and outer generalized battery models to represent a collection of thermostatic loads. The storage models obtained in these papers consist of linear constraints, similar to the polytope-based framework employed in this paper.

The rest of the paper is organized as follows. In Section~\ref{sec:load_models}, we give some general background on polytopes, the role of flexible loads aggregations in power system operations, and survey several common load types and their standard representations as polytopes. In Section~\ref{mainresult}, we present our approximation. Finally, we show that the approximation is exact for certain special classes of loads and present numerical results demonstrating the accuracy of the approach for more general load classes in Section~\ref{sec:examples}.

\section{Background}

\subsection{Notation}\label{sec:notation}

A polytope is a set in $\mathbb{R}^D$ whose boundary is composed of flat surfaces called facets~\cite{ziegler1995lectures}. These facets are derived from hyperplanes and are sets in $\mathbb{R}^{D - 1}$. We denote polytopes using bold script with subscripts for differentiation between them, e.g., $\mathbf{P}_1, \mathbf{P}_2, \dots, \mathbf{P}_k$. We restrict our attention to polytopes that are closed and bounded, i.e., compact.

The points within a polytope can be represented as convex combinations of the extreme points of the polytope~\cite{boyd_convex}. We denote points (or vectors) using lowercase italicized letters, e.g., $x, y$. The set of vertices of such polytopes then form a minimal unique (up to ordering) representation for a polytope. Such a representation is referred to as the V-representation of a polytope. Sets of vertices are denoted using uppercase letters with a bar, e.g., $\bar{X}, \bar{Y}$.

An alternate representation for a polytope is as the intersection of a collection of half-spaces (referred to as the H-representation of a polytope). In the H-representation, each half-space generates a facet of the polytope and is represented as a linear inequality, e.g., $ a^T x \leq b$. A minimal H-representation contains only inequalities corresponding to facets of the polytope with non-zero area, and is unique up to ordering and scaling. The H-representation is generally preferred to the V-representation for DR because it is the form of almost all load models.

We use uppercase letters to represent matrices and subscripts to indicate that a set or matrix is associated with a particular polytope. We may write the H-representation of a polytope in matrix form as  $ A_{1} x \leq b_{1} $, and denote it by the matrix-vector pair $(A_{1}, b_{1})$. The polytope can also be written explicitly as $\mathbf{P}_1 = \{x \, | \,  A_{1} x \leq b_{1} \}$ We use the term \textit{A-matrix} to refer to the matrix $A_{1}$ of a polytope in H-representation.

{\bf Example 1:} Consider a triangle in $\mathbb{R}^2$. In, V-representation, we may denote it by its set of vertices as $\bar{X}_{1} = \{(0,0), (1,0), (0,1)\} $. In H-representation, we may denote it by the matrix-vector pair $(A_{1}, b_{1})$, where
\[ A_{1} = \begin{bmatrix}
-1 & 0 \\
0 & -1 \\
1 & 1\\
\end{bmatrix}, 
b_{1} = \begin{bmatrix}
0 \\
0 \\
1
\end{bmatrix}. \]
Note how in this case, the vertices of the polytope are generated by solving the equalities associated with each inequality. In general, the vertices of a polytope will be generated from the solution of equalities associated with adjacent facets.

V-Representations and H-representations of a polytope can be derived from each other. Conversion from the H-representation to the V-representation is known as vertex enumeration; the reverse problem is known as facet enumeration. Unfortunately both of the above problems are, in general, NP-hard~\cite{boros_nphard}. For polytopes that are bounded, the complexity of vertex and facet enumeration remains open~\cite{khachiyan_generating_hard}. No tractable solutions to these problems are currently known.

Additionally, while the above refers to minimal V-representations and H-representations, both may contain redundant information. In the V-representation, this implies the inclusion of points lying inside the polytope. In the H-representation, this implies the inclusion of non-binding inequalities (i.e. inequalities that do not generate a facet of the polytope as their associated hyperplanes either lie outside the polytope or are tangent to it at a single point). Testing a component of either representation for redundancy can be done with linear programming~\cite{Fukuda_polyhedral_computation_faq}.

\subsection{Role within power system operations}\label{sec:role}
Large aggregations of flexible loads are valuable resources for power system operators and hence should be represented in power system dispatch routines. Multi-period optimal power flow is a standard approach to dispatching power systems with dynamic constraints such as ramping and storage capacity limits~\cite{taylor2015COPS}. Since DR also has dynamic constraints such as keeping the temperature of a building within a fixed range (a thermostatic load) and arrival and departure times (an electric vehicle), DR should also be represented within multi-period optimal power flow.

A simple instance of multi-period optimal power flow is given by
\begin{displaymath}
\begin{array}{rl}\displaystyle
\min_{p,\theta} & F(p)\\
\textrm{s.t.}
&\displaystyle \underline{p}_{i}(t)\leq p_{i}(t)\leq \overline{p}_{i}(t),\\
&\displaystyle p_{i}(t)=\sum_{j=1}^Nb_{ij}(\theta_{i}(t)-\theta_{j}(t)),\\
&\quad i=1,...,N,\;t=1,...,D
\end{array}
 \end{displaymath}
where $N$ is the number of nodes, $D$ the number of time periods, $p_{i}(t)$ the real power at node $i$ and time $t$, and $\theta_{i}(t)$ the voltage angle at node $i$ and time $t$. The objective, $F(p)$, is the total cost of generation over a sequence of time periods, which we assume to be convex. The first set of constraints enforces nodal power balances and the linearized power flow, and the second set of constraints limits the power produced or consumed at every node. Examples of the latter are generation limits or load levels. Because the above optimization has linear constraints and a convex objective, it is easy to solve at realistic scales encountered in power systems.

A number of studies have recently developed high fidelity representations of flexible load aggregations in the form of storage with time-varying parameters.  For example,~\cite{nayyar_taylor} identifies effective storage models for deferrable load aggregations. Lossless storage with only energy constraints is represented by
\begin{eqnarray*}
&&e_{i}(t+1)=e_{i}(t)+u_{i}(t)\\
&& 0\leq e_{i}(t)\leq S_{i}(t)
\end{eqnarray*} 
where $e_{i}(t)$ is the state of charge, $u_{i}(t)$ the power injection or extraction, and $S_{i}(t)$ the energy capacity at storage $i$ and time $t$. Observe that this storage models fits seamlessly within multi-period optimally power flow because the constraints are linear.

A number of DR resources could also be represented in optimal power flow this way. For example, any of the polytope models of Section~\ref{sec:load_models} could be straightforwardly inserted into a multiperiod optimal power flow. However, such an approach could introduce millions of new variables and constraints, which would be unwieldy for system operators to manage and difficult for load aggregators to communicate to system operators, e.g., as part of a bidding process. This is the motivation for representing load aggregations as generalized storage in~\cite{mathieu2013energy,mathieu2015arbitraging,nayyar_taylor,alizadeh_loadmodels}. However, this approach is also restrictive because aggregations of some load types may not be well represented as storage.

In this paper, we seek general polytope representations of load aggregations of the form $\mathbf{P}=\{x\;|\;Ax\leq b\}$, i.e., a small number of linear constraints (which we quantify Section~\ref{sec:algorithm}). Here, $x\in \mathbb{R}^D$ is the vector of power injections into the aggregation through time. Since $\mathbf{P}$ is also a (small) polytope, it can be straightforwardly added to the above multi-period optimal power flow without adding a large number of variables and constraints, thus preserving its computational tractability.

\subsection{Modeling Loads as Polytopes}
\label{sec:load_models}

In this section, we survey commonly known H-representations of polytope descriptions for several standard load types. It is natural that we confine our attention to bounded polytopes because loads cannot consume infinite power over a finite number of time periods. For simplicity of exposition, we assume that the duration of each time period is one. The (constant) power use by a load over $D$ time periods is represented as a vector of power injections $x \in \mathbb{R}^D$.

We now define some basic quantities that appear in multiple load types. Denote (time-varying) maximum and minimum power limits as $P_\text{max}(i)$ and $P_\text{min}(i)$. We use $S$ to represent the maximum energy usable by the load (or energy storable by the load), and $S_0$ to represent the initial energy stored by the load. We define a dissipation constant $\alpha$ to model losses of stored energy over one time period. Finally, we make use of input and output efficiencies $\eta_\text{in}$ and $\eta_\text{out}$. These efficiencies may represent losses between a load and the electric grid, e.g., AC to DC conversion losses during electric vehicle charging.

\subsubsection{Storage-like Loads}
\label{sec:storage_loads}
We first consider loads modeled by storage that have energy and power limits, leakage losses, and conversion inefficiencies, for instance, a charging electric vehicle (see, e.g.,~\cite{taylor2015COPS}). We break the power flow $x$ into the components  $x_{\text{in}}$ and $x_{\text{out}}$ which are power flows into and out of the load, respectively. The energy constraint is written as:
\[ 0 \le \alpha^j S_0 + \sum_{i=1}^{j} \alpha^{j-i} \eta_\text{in} x_{\text{in}}(i) + \sum_{i=1}^{j} \alpha^{j-i} \eta_\text{out} x_{\text{out}}(i) \le S, \]
$ 1  \le j \le D $. The power constraints are simply: 
\[ 0 \le x_{\text{in}}(i) \le P_\text{max}(i) \text{ and }  P_\text{min}(i) \le x_{\text{out}}(i) \le 0. \]
Define the matrix
\[ \Gamma = \begin{bmatrix}
1 & 0 & 0 & \dots & 0 \\
\alpha & 1 & 0 & \dots & 0 \\
\alpha^2 & \alpha & 1 & \dots & 0 \\
\vdots & & & & \vdots \\
\alpha^{D-1} & \alpha^{D-2} & \alpha^{D-3} & \dots & 1 \\
\end{bmatrix}. \]
Then, the polytope is defined by the matrices
\[ A_{1} = \begin{bmatrix}
I & 0 \\
-I & 0 \\
0 & I \\
0 & -I \\
\eta_\text{in} \Gamma & \eta_\text{out} \Gamma \\
- \eta_\text{in} \Gamma & - \eta_\text{out} \Gamma \\
\end{bmatrix} \text{ and }
b_{1} = \begin{bmatrix}
P_{max} \\
0 \\
0 \\
-P_{min} \\
S - \alpha S_0  \\
S - \alpha^2 S_0  \\
\vdots \\
S - \alpha^D S_0  \\
\alpha S_0  \\
\alpha^2 S_0  \\
\vdots \\
\alpha^D S_0  \\
\end{bmatrix}; \]
In this case, we explicitly write the polytope as
\[\mathbf{P}_1 = 
\left \{ \begin{bmatrix} x_{\text{in}} \\ x_{\text{out}}  \end{bmatrix} \, \left| \,
A_{1} 
\begin{bmatrix} x_{\text{in}} \\ x_{\text{out}}  \end{bmatrix} \leq b_{1}\right. \right \}. \]


\subsubsection{Thermostatic loads}
\label{sec:thermo_load_model}

Thermostatic loads (TCLs) are modeled in~\cite{hao_thermostatic}, which shows how to map parameters associated with TCLs to those associated with generalized loads. The authors specify TCLs in terms of a set of parameters $\chi^k = (a,b,\theta_a, \theta_r, \Delta, P_m)$, where $a = \frac{1}{R C}$, $b = \frac{\eta}{C}$, $R$ is thermal resistance, $C$ is thermal capacitance, $P_m$ is rated electrical power, $\eta$ is coefficient of performance, $\theta_a$ is ambient temperature, $\theta_r$ is the set-point temperature, and $\Delta$ is the dead-band.

For a TCL, the dynamics are written in terms of the temperature $\theta(t)$ as follows:
\[ \theta(t + 1) = (1-a) \theta(t) + a \theta_a - b x(t). \]
We can expand this equation as:
\[
\theta(j) = (1-a)^j \theta_0 + a  \sum_{i = 1}^{j} \theta_a (i) (1-a)^{j - i} - b \sum_{i = 1}^{j} (1-a)^{j-i} x(j).
\]
The temperature deadband constraint is then given by:
\[ \theta_r - \Delta \le \theta(j) \le \theta_r + \Delta; \, 1  \le j \le D. \]

Let us denote $(1-a)^j \theta_0 + a  \sum_{i=0}^{j-1} \theta_a (i) (1-a)^i$ as $\theta_j$. Then, the deadband constraint is equivalently stated as:
\begin{multline*} \frac{\theta_r - \Delta - \theta_j}{b} \le - \sum_{i = 1}^{j} (1-a)^{j-i} x(j) \le \frac{\theta_r + \Delta -\theta_j}{b}, \\ \, 1  \le j \le D. 
\end{multline*} 
The above inequality is similar to the energy constraint of a generalized storage load, and can be similarly written in H-representation.

\subsubsection{Deferrable loads}
\label{sec:def_loads}

Deferrable loads like electric vehicles are essentially storage-like loads with arrival and departure times. In this example, we present perfectly efficient deferrable loads, which have a power constraint and a single equality energy constraint~\cite{nayyar_taylor}. We denote the total energy requirement of the load by $E$.

The constraints for a deferrable load may be written as:
\[ 0 \le x(i) \le P_\text{max}(i), \, 1  \le i \le D \text{ and}
\sum_{i = 1}^{D}x(i) = E.\]
The associated matrix representation is:
\[ A_{1} = \begin{bmatrix}
& \mathrm{I} \\
& -\mathrm{I} \\
1 & \dots & \ 1 \\
-1 & \dots & -1
\end{bmatrix} \text{ and } 
b_{1} = \begin{bmatrix}
P_\text{max} \\
0 \\
E \\
-E
\end{bmatrix}. \]

Arrival and departure constraints are encoded in the vector $P_{\max}$ by setting
\[ P_{\max}(i) = 0 \, \text{ for } \, i < t_a \text{ or } i \ge t_d, \]
where $t_a$ is the arrival time and $t_d$ the departure time.

\subsubsection{Differential power constraints}
Differential power constraints can be used to prevent large changes in the power consumption or supply of a load, and are commonly encountered when dealing with industrial equipment. They may be added into any of the above load models. We denote the maximum allowed bi-directional difference between the power used in a period and the power used is a subsequent period as $\delta>0$. The differential power constraints may be written as below, and a matrix formulation is easily derived.
\[ -\delta \le x(i+1) - x(i) \le \delta \text{ for } 1 \le i \le D-1.  \]


\subsubsection{Non-polytopic loads}
Finally, it is worth discussing a type of load that does not have a polytope formulation. Consider a load which must use 100 kW of power for a one hour period during a specified three-hour window. We can represent this load as the union of three points in $\mathbb{R}^3$: $ \{(100, 0, 0), \, (0, 100, 0), \, (0, 0, 100)\}$. Obviously, the resultant set is non-convex, and would typically be represented with integer constraints. Such a load cannot be simply aggregated with other polytopes using our subsequent approach. However, polytopic or other convex relaxations of such load models can often be constructed. For instance, the above example can be relaxed to $\sum_{i=1}^3x(i)=100$, $0\leq x(i)\leq 100$ for $i=1,2,3$, which is a polytope.

\subsection {Load Aggregation as Minkowski Sums}
\label{sec: Msum}

Individual loads in DR programs are generally small compared to the size of resources normally dispatched by system operators. As discussed in the Introduction and Section~\ref{sec:role}, adding potentially $10^6$ small loads to the scope of their responsibilities is undesirable. Aggregators act as intermediaries, finding a single compact representation of these loads for the system operator and then controlling the loads in response to the system operator's instructions. For loads specified
as polytopes, their aggregate capability is exactly described by the Minkowski sum, as observed in~\cite{hao_thermostatic}; in~\cite{alizadeh_loadmodels}, this quantity is referred to as the plasticity of the aggregation.

The Minkowski sum of two polytopes in $\mathbb{R}^D$,  $\mathbf{P}_1$ and $\mathbf{P}_2$, is itself a polytope defined by
 \begin{equation}
\mathbf{P}_3 = \{ z \, | \, z = x + y, \, x \in \mathbf{P}_1, \, y \in \mathbf{P}_2 \}
 \end{equation}
In words, if $\mathbf{P}_1$ and $\mathbf{P}_2$ are the sets of feasible power profiles of two loads, $\mathbf{P}_3$ is the set of feasible power profiles of the aggregation of the two loads.

If the polytopes have V-Representations $\bar{X}$ and $\bar{Y}$ respectively, then the V-Representation of the Minkowski Sum can be found by taking the sum of each vertex pair $\{x + y \;|\; x \in \bar{X}, y \in \bar{Y}\}$, and finding the convex hull of the result.

However, if the polytopes are specified in H-representation, the above method is computationally intractable for non-trivial polytopes. This is because it requires performing the vertex enumeration operation for both polytopes. As discussed in Section~\ref{sec:notation}, no known polynomial time algorithm exists for vertex enumeration~\cite{boros_nphard,khachiyan_generating_hard}.

\section {Approximate Load Aggregation}\label{mainresult}
We now develop a generic outer approximation of the Minkowski sum of two polytopes.

\subsection{Polytopes with the same shape}

Consider the following pair of polytopes in H-representation:
\[
\mathbf{P}_1 = \{ x \, | \, A_1 x \leq b_1\} \quad \textrm{and}\quad  \mathbf{P}_2 = \{ y \, | \, A_2 y \leq b_2\} .
\]

We would like to find an approximate representation for the polytope $ \mathbf{P}_3 = \{z \, | \, z = x + y, x \in \mathbf{P}_1, y \in \mathbf{P}_2 \} $, the Minkowski sum of $\mathbf{P}_1 \text{ and } \mathbf{P}_2 $.

\begin{theorem} [Outer Approximation]\label{prop:outer}
Suppose $A_1=A_2=A$. The polytope $ \mathbf{P}_4 = \{ z \, |\, A z \leq b_1 + b_2 \} \subset\mathbb{R}^D$ is an outer approximation to $ \mathbf{P}_3$, the Minkowski sum of $ \mathbf{P}_1 \text{ and } \mathbf{P}_2$.
\end{theorem}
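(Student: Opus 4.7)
The plan is to establish the set containment $\mathbf{P}_3\subseteq\mathbf{P}_4$ directly from the definitions, exploiting the linearity of the map $z\mapsto Az$ together with the fact that $A_1=A_2=A$ lets us add the two defining inequality systems componentwise.

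First I would take an arbitrary $z\in\mathbf{P}_3$ and unpack what membership in the Minkowski sum means: by definition there exist $x\in\mathbf{P}_1$ and $y\in\mathbf{P}_2$ with $z=x+y$. From the H-representations I then have the two vector inequalities $Ax\le b_1$ and $Ay\le b_2$. Because both systems share the same A-matrix, I can add them coordinate by coordinate to conclude $Ax+Ay\le b_1+b_2$, and by linearity $A(x+y)\le b_1+b_2$, i.e.\ $Az\le b_1+b_2$. This says exactly that $z\in\mathbf{P}_4$, so $\mathbf{P}_3\subseteq\mathbf{P}_4$ as desired.

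There is essentially no obstacle here; the argument is a one-line application of the additivity of coordinatewise $\le$ under a common linear map. The only thing worth flagging in the write-up is \emph{why} the hypothesis $A_1=A_2$ is doing the work: without a common left-hand side, the two row systems live in different ``row spaces'' and cannot be summed into a single valid H-representation in this simple way. This shared-A-matrix assumption is precisely what makes the construction $(A,b_1+b_2)$ well-defined and is what the later sections presumably relax by first rewriting heterogeneous polytopes over a common A-matrix before invoking this proposition.

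I would keep the proof to two or three sentences, matching the triviality of the underlying calculation, and possibly add a one-line remark that the approximation is generally strict — the converse inclusion can fail because a point $z$ satisfying $Az\le b_1+b_2$ need not admit a splitting $z=x+y$ with $Ax\le b_1$ and $Ay\le b_2$ — to motivate the later numerical study of how loose the outer approximation is.
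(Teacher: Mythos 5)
Your proof is correct and is essentially identical to the paper's own argument: decompose $z=x+y$ with $x\in\mathbf{P}_1$, $y\in\mathbf{P}_2$, add the two inequality systems $Ax\le b_1$ and $Ay\le b_2$ componentwise, and conclude $Az\le b_1+b_2$, hence $z\in\mathbf{P}_4$. Your added remarks on the role of the shared $A$-matrix and on the possible strictness of the inclusion are sensible but do not change the substance of the proof.
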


\begin{proof}
Suppose $z$ is in the Minkowski sum of $\mathbf{P}_1$ and $\mathbf{P}_2$. Then there exist $x_1\in \mathbf{P}_1$ and $x_2\in \mathbf{P}_2$ such that $z=x_1+x_2$. Adding the constraints 
\[
A x_1 \leq b_1 \quad\textrm{and}\quad A x_2 \leq b_2,
\]
we obtain $A(x_1+x_2)\leq b_1+b_2$. Therefore, $z\in\mathbf{P}_4$. Since any element of the Minkowski sum of $\mathbf{P}_1$ and $\mathbf{P}_2$ is in $\mathbf{P}_4$, it is an outer approximation.
\end{proof}
We will refer to the polytope $\mathbf{P}_4 =  \{ z \, | \, A z \leq (b_1 + b_2) \} $ as the \textit{outer Minkowski approximation}. We remark that the outer Minkowski approximation could also be referred to as a relaxation of the exact Minkowski sum.

{\bf Example 1:} Consider the polytopes $\mathbf{P}_1$, $\mathbf{P}_2$ and $\mathbf{P}_3$ shown in Figure \ref{Fig:two}, where $\mathbf{P}_3$ is the Minkowski Sum of $\mathbf{P}_1$ and $\mathbf{P}_2$. All are triangles in $\mathbb{R}^2$. In, V-representation, $\bar{X}_{1} = \{(1,1), (2,1), (1,2)\} $, $\bar{X}_{2} = \{(2,1), (4,1), (2,3)\} $ and
$\bar{X}_{3} = \{(3,2), (6,2), (3,5)\} $. The reader can see that the vertices of $\mathbf{P}_3$ are the sum of vertices of $\mathbf{P}_1$ and $\mathbf{P}_2$, and that other points generated by the sum of points inside $\mathbf{P}_1$ and $\mathbf{P}_2$ lie within $\mathbf{P}_3$.
\begin{figure} [h]
\includegraphics[width=9cm]{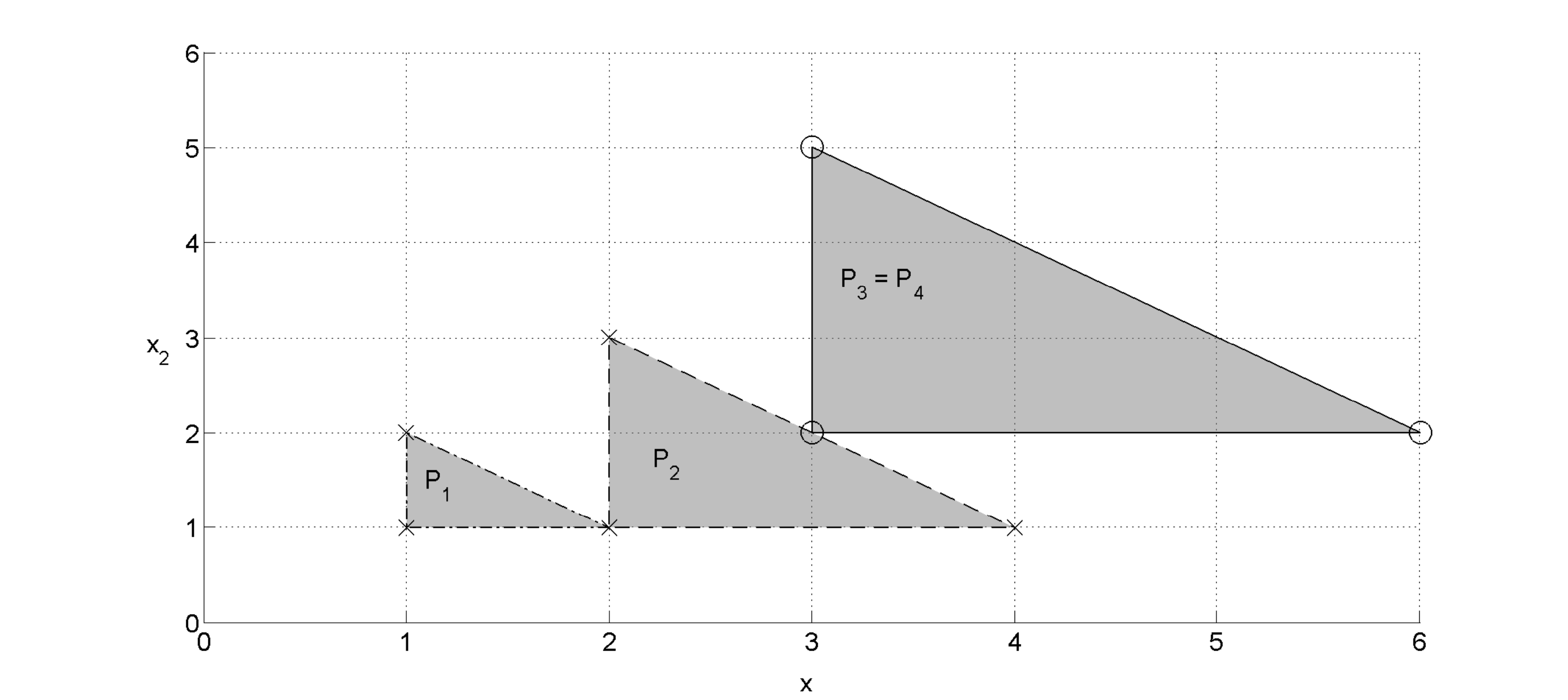}
\caption{The Minkowski sum of two triangles.}
\label{Fig:two}
\end{figure}
The H-representations of $\mathbf{P}_1$ and $\mathbf{P}_2$ are:
\[ A_{1} = \begin{bmatrix}
-1 & 0 \\
0 & -1 \\
1 & 1\\
\end{bmatrix}, 
b_{1} = \begin{bmatrix}
-1 \\
-1 \\
3
\end{bmatrix}\] 
\[ A_{2} = \begin{bmatrix}
-1 & 0 \\
0 & -1 \\
1 & 1\\
\end{bmatrix}, 
b_{2} = \begin{bmatrix}
-2 \\
-1 \\
5
\end{bmatrix}. \]
Since $A_{1} = A_{2} = A$, Proposition~\ref{prop:outer} may be used to find the outer approximation, which we denote $\mathbf{P}_4$ and is given in H-representation by
\[ A = \begin{bmatrix}
-1 & 0 \\
0 & -1 \\
1 & 1\\
\end{bmatrix}, 
b_{4} = \begin{bmatrix}
-3 \\
-2 \\
8
\end{bmatrix}.\] 
It can be verified that this is the exact Minkowski sum.

\subsection {Extension to general polytopes}
\label{sec:gen_polytopes}

The above approximation is limited to Minkowski Sums of polytopes that have the same $A$-matrices, which restricts its applicability to aggregations of loads of the same type. We now extend this formulation to arbitrary polytopes in $\mathbb{R}^D$, which broadens its applicability to aggregations containing many different types of loads.

Consider two polytopes in their minimum H-Representation, $\mathbf{P}_1$ and $\mathbf{P}_2$, described by the matrix-vector pairs $(A_{1}, b_{1})$ and $(A_{2}, b_{2})$. An exact, alternate H-representation for $\mathbf{P}_1$ and $\mathbf{P}_2$ can be constructed in terms of the matrix-vector pairs $(A', b'_{1})$ and $(A', b'_{2})$, where $A'$, $b'_{1}$, and $b'_{2}$ are new matrices which we describe below.

Observe that $\mathbf{P}_1$ can be described in set notation as an intersection of half-spaces, each of which is defined by a linear inequality:
\[
\mathbf{P}_1=\bigcap_{i=1}^N \{x \, | \, a_1(i)^T x \leq \mathrm{b}_1(i) \},
\]
where
\[ A_1 = 
\begin{bmatrix}
  a_1(1)^T \\
  \vdots \\
  a_1(N)^T \\
\end{bmatrix} \text{ and } 
b_1 = 
\begin{bmatrix}
  \mathrm{b}_1(1) \\
  \vdots \\
  \mathrm{b}_1(N)
\end{bmatrix}. \]
 
Here, $  a_1(1)^T \dots a_1(N)^T$ are row-vectors in $\mathbb{R}^D $ and $\mathrm{b}_1(1) \dots \mathrm{b}_1(N)$ are scalars. From this expression, we see that:
\begin{itemize}
\item The rows of the matrix-vector pair $(A_{1}, b_{1})$ can be arbitrarily reordered without changing the polytope.
\item We can add an additional linear constraint to the polytope (i.e., an additional row to the matrix-vector pair), $a(N+1)^T x \leq b(N+1) $,  provided that the following inclusion is satisfied:
\end{itemize}
\begin{equation}
\{x \, | \, A_1 x \leq b_1\} \, 
\subseteq \,
\{x \, | \,  a(N+1)^T x \leq b(N+1)\}\label{eq:inc}
\end{equation}
Equation~(\ref{eq:inc}) states that the polytope $ \mathbf{P}_1 $ lies inside the half-space defined by $ a(N+1)^T x \leq b(N+1) $. We refer to such inequality constraints as \textit{redundant constraints} because they can be added to or eliminated from a polytope without changing it~\cite{Avis}.

Our subsequent approximation attains the highest accuracy when redundant constraints with the smallest possible $b(N+1)$ are used. For an arbitrary row-vector, $ a(\mathrm{N}+1)^T $, we can find the smallest constant $b(\mathrm{N}+1)^*$ that satisfies Equation ~(\ref{eq:inc}) by solving the linear program:    
\begin{equation}
\begin{array}{lllc}

b(N+1)^* & = & \text{maximize} & a(N+1)^T x \\
& & \text{subject to} & A_1 x \leq b_1 

\end{array}\label{eq:red}
\end{equation}
For this choice of $b(N+1)^*$, the equality $ a(N+1)^T x = b(N+1)^* $ describes a hyperplane that is tangent to $\mathbf{P}_1$.

Thus, if a constraint $ a_2(M)^T x \leq  b_2(M) $ is present in the H-representation of polytope $\mathbf{P}_2$ but not $\mathbf{P}_1$, we can add it as the $N+1^\text{th}$ row in $A_1$, and find the associated scalar $ b_1(N+1)^* $ using~(\ref{eq:red}) (or vice-versa). This constraint will then be tangent to polytope $\mathbf{P}_1$.

By adding redundant constraints as described above and reordering, we construct alternate representations for polytopes $\mathbf{P}_1$ and $\mathbf{P}_2$ as the matrix-vector pairs $(A', b_{1}')$ and $(A', b_{2}')$. These representations have the same $A$-matrices, and therefore we can obtain their outer Minkowski approximation via Proposition~\ref{prop:outer}.

It should be noted that if polytopes $\mathbf{P}_1$ and $\mathbf{P}_2$ have $m_1$ and $m_2$ constraints, respectively, with $c$ constraints in common, then the outer Minkowski approximation will have $m_1 + m_2 - c$ constraints; i.e. it's A-matrix will have $m_1 + m_2 - c$ rows.

\subsection{Load aggregation algorithm}
\label{sec:algorithm}
We now present our procedure as an algorithm for approximately representing aggregations of loads described by polytopes.

\begin{enumerate}
\item \textit{Input:} $N$ load polytopes over $D$ time periods in H-representation: $\mathbf{P}_1, \mathbf{P}_2, \dots, \mathbf{P}_N \in \mathbb{R}^D$. Each polytope is described by an arbitrary number of constraints.
\item Search through the $A$-matrices of all $N$ polytopes and make a list of every unique row. This is a polynomial-time sorting operation. The $A'$ matrix consists of all unique rows, and is in $\mathbb{R}^{c\times N}$.
\item For all $N$ polytopes and all $c$ unique constraints, run linear programs to find tangent facets, and construct the vectors $b_1',...,b_N'$. The total number of linear programs run is upper bounded by $cN $, and can be substantially less if the $A$-matrices contain many common rows.
\item \textit{Output:} By Proposition~\ref{prop:outer}, the polytope $\{x\,|\,A'x\leq \sum_{i=1}^N b_i'\}$ is an outer approximation of the Minkowski sum of the $N$ polytopic loads.
\end{enumerate}

Linear programs have polynomial time complexity~\cite{Bazaraa_LP_and_NF}. As our algorithm invokes a polynomial number of LPs, its complexity also grows polynomially with the number of loads and dimensions. We illustrate the application of the algorithm in the below example.

{\bf Example 3:} Suppose we have two loads with $A$-matrices
\[ A_1 = 
\begin{bmatrix}
  A \\
  a_1^T \\
\end{bmatrix} \quad\textrm{and}\quad A_2 = 
\begin{bmatrix}
  A \\
  a_2^T \\
\end{bmatrix},
\]
and $\mathbf{P}_1=\{x \, | \,  A_1 x \leq b_1\}$ and $\mathbf{P}_2=\{x \, | \,  A_2 x \leq b_2\}$. Suppose further that $b_1\in\mathbb{R}^N$ and $b_2\in\mathbb{R}^N$, and define
\[
\begin{array}{lllc}

b_1(N+1)^* & = & \text{maximize} & a_2^T x \\
& & \text{subject to} & A_1 x \leq b_1 

\end{array}
\]
$b_2(N+1)^*$ is defined analogously. Let
\[
A'=\begin{bmatrix}
  A \\
  a_1^T \\
  a_2^T \\
\end{bmatrix},\, 
b_1'=\begin{bmatrix}
  b_1 \\
  b_1(N+1)^* \\
\end{bmatrix},\,
b_2'=\begin{bmatrix}
  b_2(1) \\
  \vdots\\
  b_2(N-1)\\
  b_2(N+1)^* \\
  b_2(N)
\end{bmatrix}.
\]
Then $\mathbf{P}_1=\{x \, | \,  A' x \leq b_1'\}$ and $\mathbf{P}_2=\{x \, | \,  A' x \leq b_2'\}$. Using Proposition~\ref{prop:outer}, we obtain $\mathbf{P}_4=\{x \, | \,  A' x \leq b_1'+b_2'\}$ as an outer approximation of the Minkowski sum of $\mathbf{P}_1$ and $\mathbf{P}_2$.



\section{Examples}\label{sec:examples}

\subsection{Numerical Examples}

In this section, we numerically evaluate the accuracy of the outer Minkowski approximation for two general classes of loads, thermostatic loads and generalized energy storage. As our load aggregations are closed polytopes, they can be characterized by volumes. The outer Minkowski approximation contains the exact Minkowski sum and therefore always has larger volume; when their volumes are identical, the approximation is exact. The ratio of volumes of two polytopes hence measures absolute accuracy when one polytope is the exact Minkowski sum, and relative accuracy when both polytopes are approximations. We thus use such volume ratios to describe the error in the outer Minkowski approximation. 

However, the exact computation of volume of a high-dimensional polytope is an NP-hard problem\cite{dyer1998complexity}. We thus make use of a Monte-Carlo method for volume estimation. For the polytopes in question, we define a bounding box in $\mathbb{R}^D$ and uniformly sample this box. The fraction of points inside the polytope yields an estimate of the volume.

\subsubsection{Thermostatic Loads}

Models for thermostatic loads were described in Section~\ref{sec:thermo_load_model}. We generate sets of randomized parameters to describe 1000 distinct loads; the mean values ($\mu$) of the parameters varied are: the thermal capacitance ($2 \text{ kWh/\degree C}$), the thermal resistance ($2 \text{\degree C/kW }$), the rated electrical power ($ 5.6 \text{ kW}$), the coefficient of performance ($2.5$), the temperature setpoint ($22.5 \degree \text{C}$) and the temperature deadband ($0.3 \degree \text{C}$), which are taken from~\cite{hao_thermostatic}. 
Each of the load parameters are drawn from a uniform distribution from between $0.9 \mu - 1.1 \mu$ for a low heterogeneity scenario, and from between $0.8 \mu - 1.2 \mu$ for a high heterogeneity scenario. Additionally the starting temperature of each load is drawn from a uniform distribution over the deadband.

We consider a 1-hour time period and look at the performance of the approximation as the interval of discretization is varied, e.g. two, 30-minute slots, four, 15-minute slots, etc. When computing the outer Minkowski approximation (denoted as OM), each load is first approximated by an equivalent load whose dissipation constant is the mean of the set; this approximation is computed as an outer (necessary) approximation.

We also compute necessary (denoted as GB-N) and sufficient (denoted as GB-S) generalized battery approximations for the aggregation of these loads, as in~\cite{hao_thermostatic}. Note that~\cite{hao_thermostatic} also addresses the control of a collection of thermostatic loads for regulation on very fast timescales, which is beyond our scope; the aggregate models developed in it are useful for comparison given that exact results cannot be obtained, but are intended for a different purpose than our approach. These battery approximations are modeled as polytopes, as explained in Section~\ref{sec:storage_loads}, after which their volumes are found. One billion points are generated for each Monte-Carlo volume estimation case.

In Figure~\ref{OMvsGBFig}, we plot the volume ratios: OM / GB-N and GB-S / GB-N as a function of the number of slots used for discretization of the 1-hour period, for both low (Low-h) and high (High-h) heterogeneity scenarios.  

\begin{figure} [h]
\includegraphics[width=9cm]{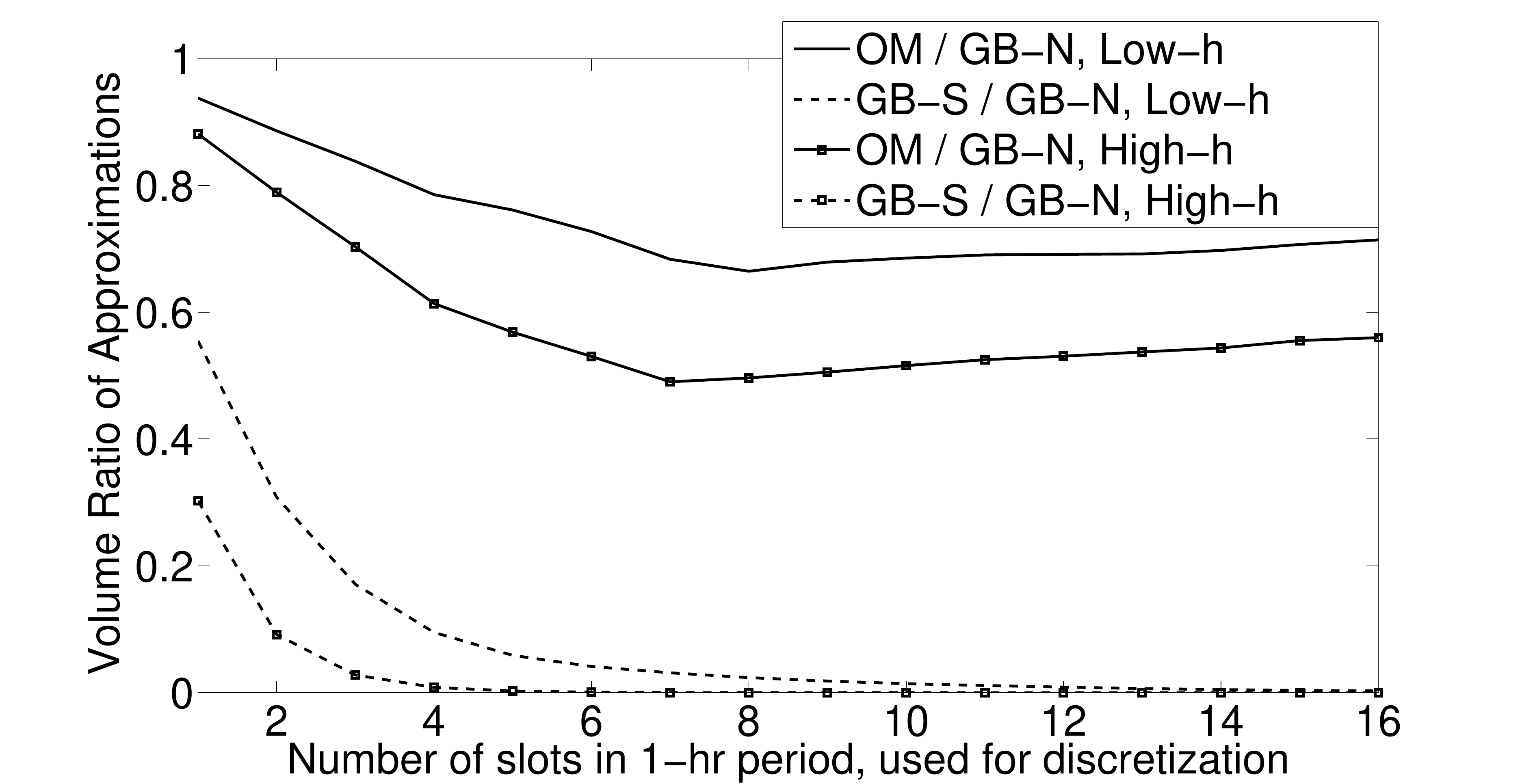}
\caption{Volume comparison of thermostatic load aggregations.}
\label{OMvsGBFig}
\end{figure}

We find that for both scenarios, the size of the OM approximation is smaller than the GB-N from~\cite{hao_thermostatic} (and, by construction, larger than the GB-S). Hence the OM approximation is more accurate than the GB-N approximation, by approximately a factor of $1.5 - 2$ depending on the amount of load heterogeneity. Additionally, we see that the performance of the OM approximation improves vis-a-vis the  GB-N approximation in the higher heterogeneity scenario.

\subsubsection{Storage Loads}

Models for storage loads were described in Section~\ref{sec:storage_loads}. Here, we focus on non-dissipative storage loads that are fully present for the period of aggregation and have input/output efficiencies of unity.

We take randomized parameters for 2000 loads, and use them to compute 1000 pairwise sums (and/or approximations). We carry out this process for dimensions from $\mathbb{R}^2$ to $\mathbb{R}^{20}$ by instantiating loads for time intervals of $D=\{2,...,20\}$ hours, with hourly slots. The loads have power limits uniformly distributed between $30$ and $70$, and energy capacities that are uniformly distributed with between $120$ and $280$; finally, the initial states of charge are uniformly distributed from $0$ to the energy capacity. 

We use MPT~\cite{mpt} to compute the volumes of the approximate and exact pairwise sums up to $\mathbb{R}^6$, beyond which the computations become intractable. We then compute the average over the 1000 cases of the ratio of the exact volume and that obtained by the OM approximation. We also use a Monte-Carlo method to estimate the volume of the approximation up to $\mathbb{R}^{20}$, which we use to validate the results from MPT and to examine the behavior of the approximation with increase in dimension. We comment that approximately $2.74 \%$ of the computed data had to be thrown out because of numerical errors in computations by the MPT toolbox. We plot the results in Figures \ref{MVFig} and \ref{MEFig}.

\begin{figure} [h]
\includegraphics[width=9cm]{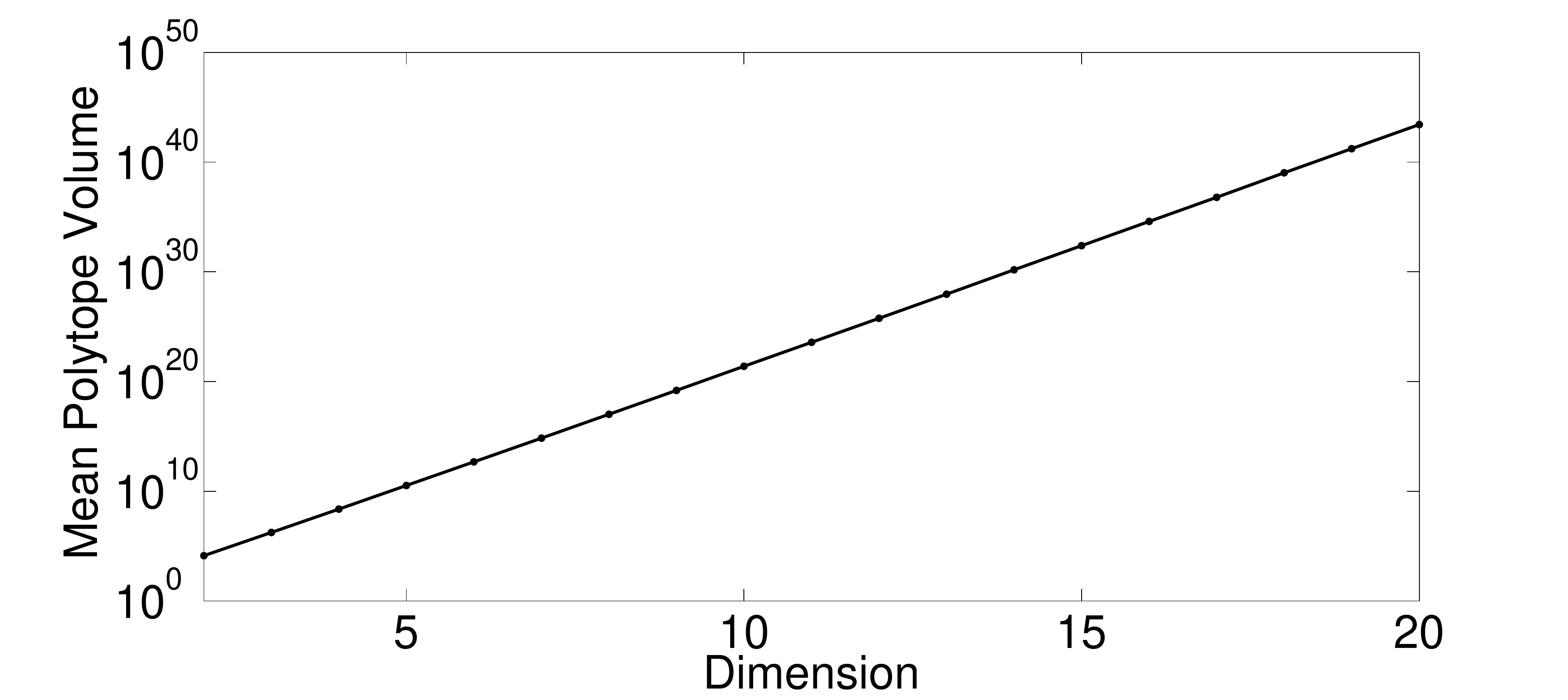}
\caption{Approximation volume for aggregations of storage loads, up to $\mathbb{R}^{20}$.}
\label{MVFig}
\end{figure}

As observed in Figure \ref{MVFig}, the mean volume of the approximate aggregation scales exponentially with dimension as expected; this appears as linear on a semilogarithmic plot.

\begin{figure} [h]
\includegraphics[width=9cm]{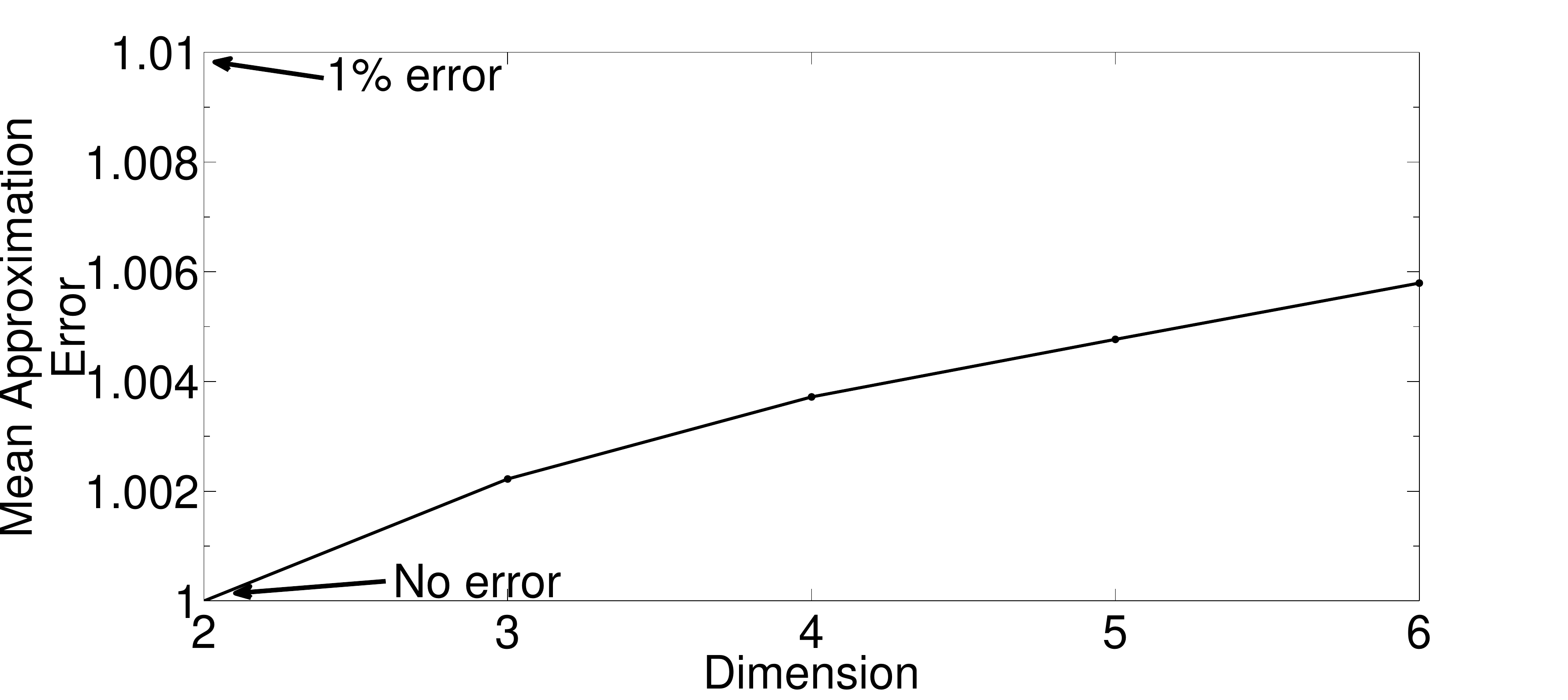}
\caption{Approximation error for aggregations of storage loads, up to $\mathbb{R}^6$.}
\label{MEFig}
\end{figure}

The error (defined as the ratio of the volume of the approximation to the volume of the exact result)is computable only up to $\mathbb{R}^6$. We see, in Figure \ref{MEFig}, that it remains below $0.7 \%$ for those dimensions, and grows sub-linearly, indicating that the OM approximation continues to achieve low errors in higher dimensions. 

\subsection{Analytical Results}

In this section we present two useful analytical results regarding the exactness of the outer Minkowski approximation when applied to specific load classes.

\subsubsection{Loads with only power constraints}

Let us consider loads with power limit vectors in $\mathbb{R}^D$, $P_h$ and $P_l$ for all $D$ time periods, such that $ P_l \le x \le P_h $ (different power limits for each time period). They may be represented by the following simple $D$-dimensional hypercube:
\[  \begin{bmatrix}
\mathrm{I} \\
-\mathrm{I} \\
\end{bmatrix} x \leq \begin{bmatrix}
P_h \\
-P_l \\
\end{bmatrix}. \]

\begin{theorem} [Exactness of outer approximation for hypercubes] Consider two hypercube loads defined by power limit vectors, $P_{h1}$ and $P_{l1}$ for the first and $P_{h2}$ and $P_{l 2}$ for the second. The outer Minkowski approximation to the Minkowski sum of these loads is exact, and is given by:
\[ \begin{bmatrix}
\mathrm{I} \\
-\mathrm{I} \\
\end{bmatrix} x \leq \begin{bmatrix}
P_{h 1} + P_{h 2} \\
P_{l 1} + P_{l 2} \\
\end{bmatrix}. \]

\end{theorem}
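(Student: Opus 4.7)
The plan is to first invoke Proposition~\ref{prop:outer} to pin down the outer Minkowski approximation in closed form, and then prove exactness by establishing the reverse inclusion through an explicit coordinatewise construction.

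Since both hypercubes share the common A-matrix $\begin{bmatrix} \mathrm{I} \\ -\mathrm{I} \end{bmatrix}$, Proposition~\ref{prop:outer} applies immediately and yields the outer approximation $\{z \mid \begin{bmatrix} \mathrm{I} \\ -\mathrm{I} \end{bmatrix} z \leq \begin{bmatrix} P_{h1}+P_{h2} \\ -(P_{l1}+P_{l2}) \end{bmatrix}\}$, which matches the hypercube stated in the claim. What remains is to show that every such $z$ actually lies in the true Minkowski sum $\mathbf{P}_1 + \mathbf{P}_2$, i.e., that it admits a decomposition $z=x+y$ with $x\in\mathbf{P}_1$ and $y\in\mathbf{P}_2$.

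The key structural observation is that each hypercube is a Cartesian product of one-dimensional intervals, one per time period, so the decomposition problem decouples across coordinates. For each coordinate $i$ I need only solve the one-dimensional problem: given $z(i)\in[P_{l1}(i)+P_{l2}(i),\,P_{h1}(i)+P_{h2}(i)]$, find $x(i)\in[P_{l1}(i),P_{h1}(i)]$ and $y(i)\in[P_{l2}(i),P_{h2}(i)]$ with $x(i)+y(i)=z(i)$. A clean explicit choice is $x(i)=\max\{P_{l1}(i),\,z(i)-P_{h2}(i)\}$ and $y(i)=z(i)-x(i)$; a short case split on whether $z(i)-P_{h2}(i)$ exceeds $P_{l1}(i)$ verifies that both coordinates land in their intervals.

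Assembling these coordinatewise splittings yields vectors $x\in\mathbf{P}_1$ and $y\in\mathbf{P}_2$ with $x+y=z$, which gives the reverse inclusion and hence exactness. I do not anticipate a real obstacle here, since the separability of axis-aligned boxes reduces the problem to the elementary identity $[a,b]+[c,d]=[a+c,b+d]$ for sums of intervals; the only detail requiring a little care is keeping the sign convention on the lower-bound rows straight, because $-P_l$ (rather than $P_l$) appears in the H-representation.
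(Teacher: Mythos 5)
Your proof is correct, but it takes a genuinely different route from the paper's. The paper works in the V-representation: it notes that the exact Minkowski sum of two boxes is the convex hull of all pairwise vertex sums, asserts via ``straightforward calculation'' that the resulting vertex set is exactly the vertex set of the box $\bigl[P_{l1}+P_{l2},\,P_{h1}+P_{h2}\bigr]$, and concludes that this coincides with the outer Minkowski approximation. You instead work entirely in the H-representation: Proposition~\ref{prop:outer} gives the inclusion of the true sum in the box, and you establish the reverse inclusion by an explicit coordinatewise splitting $x(i)=\max\{P_{l1}(i),\,z(i)-P_{h2}(i)\}$, $y(i)=z(i)-x(i)$, whose case analysis does check out (in one case $y(i)=z(i)-P_{l1}(i)$ lands in $[P_{l2}(i),P_{h2}(i)]$ by the two box constraints on $z(i)$; in the other, $x(i)=z(i)-P_{h2}(i)$ lands in $[P_{l1}(i),P_{h1}(i)]$ similarly). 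Your argument is more self-contained --- it avoids invoking the convex-hull-of-vertex-sums characterization and any vertex enumeration, reducing everything to the interval identity $[a,b]+[c,d]=[a+c,b+d]$ --- and it extends verbatim to sums of any number of boxes. The paper's argument is shorter because the vertex structure of a box is transparent, but it leaves the key calculation implicit (and its stated vertex set is garbled as written). One further point in your favor: you correctly flag the sign convention, since the right-hand side of the $-\mathrm{I}$ rows must be $-(P_{l1}+P_{l2})$ for the statement's box to be the one produced by Proposition~\ref{prop:outer}; the statement as printed drops that minus sign.
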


\begin{proof}
The exact Minkowski sum of two hypercubes can be computed by taking the convex hull of the sums of all vertex pairs. Straightforward calculation gives the vertex set $\bigcup_{i=1}^D(P_l(i)+P_h(i))$. The outer Minkowski approximation is the same hypercube.
\end{proof}

\subsubsection{Deferrable loads}

Let us consider deferrable loads whose total energy consumption is denoted as $E$, as in Section~\ref{sec:def_loads}. Such a load requires nonnegative power over all $D$ time periods, and its total energy consumption must be $E$ by the last time period. The $k^{\text{th}}$ such load may be represented by the following matrices:
\[ A = \begin{bmatrix}
& -\mathrm{I} &  \\
1 &  \dots & 1 \\
-1 &  \dots & -1 \\
\end{bmatrix}, 
b_k= \begin{bmatrix}
0 \\
E_k \\
-E_k \\
\end{bmatrix}. \]


\begin{theorem} [Exactness of outer Minkowski approximation for deferrable load polytopes] Consider two deferrable loads defined as above with energy requirements  $E_1$ and $E_2$ and which are present over the same time periods. Then, their outer Minkowski approximation is exact and is given by $\mathbf{P} = \{ x \, | \, A x \leq b' \}$, where
\[ A= \begin{bmatrix}
& -\mathrm{I} & \\
1 &  \dots & 1 \\
-1 & \dots & -1 \\
\end{bmatrix}, \,
b' = \begin{bmatrix}
0 \\
E_1 + E_2\\
-(E_1 + E_2) \\
\end{bmatrix}.\]

\end{theorem}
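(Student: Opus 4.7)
The plan is to use Proposition~\ref{prop:outer} to get one direction of the set equality for free, and then verify the reverse inclusion by an explicit constructive decomposition. Since both deferrable-load polytopes share the same $A$-matrix, Proposition~\ref{prop:outer} immediately gives $\mathbf{P}_3 \subseteq \mathbf{P}$, where $\mathbf{P}_3$ is the exact Minkowski sum and $\mathbf{P}$ is the claimed polytope with right-hand side $b'$. So the only work is to show $\mathbf{P} \subseteq \mathbf{P}_3$, i.e., that every point of $\mathbf{P}$ admits a splitting as a sum of a point in $\mathbf{P}_1$ and a point in $\mathbf{P}_2$.

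First I would unpack the constraints: $\mathbf{P}_k = \{x \in \mathbb{R}^D : x \geq 0,\ \sum_{i=1}^D x(i) = E_k\}$, which is a scaled simplex, and $\mathbf{P} = \{z \in \mathbb{R}^D : z \geq 0,\ \sum_{i=1}^D z(i) = E_1 + E_2\}$. Take any $z \in \mathbf{P}$. The natural idea is to split $z$ proportionally to the two energy budgets. Assuming $E_1 + E_2 > 0$, set
\[
x = \frac{E_1}{E_1 + E_2}\, z, \qquad y = \frac{E_2}{E_1 + E_2}\, z.
\]
Clearly $x + y = z$, both $x, y \geq 0$ since $z \geq 0$, and summing the components gives $\sum_i x(i) = \frac{E_1}{E_1+E_2}(E_1+E_2) = E_1$ and likewise $\sum_i y(i) = E_2$. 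Hence $x \in \mathbf{P}_1$ and $y \in \mathbf{P}_2$, so $z \in \mathbf{P}_3$.

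Second, I would dispose of the degenerate case $E_1 = E_2 = 0$: then $\mathbf{P}_1 = \mathbf{P}_2 = \{0\}$, $\mathbf{P} = \{0\}$, and the claim is trivial. Combining both cases establishes $\mathbf{P} \subseteq \mathbf{P}_3$, and together with the Proposition~\ref{prop:outer} inclusion we conclude $\mathbf{P} = \mathbf{P}_3$, i.e., the outer Minkowski approximation is exact.

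No step is really a sticking point here; the proof is essentially a one-line convex-combination argument, and the only mild subtlety is noticing that the equality constraint $\sum_i x(i) = E_k$ (encoded as a pair of opposite inequalities) is what forces the proportional split to be feasible and simultaneously makes the outer approximation tight. The shared $A$-matrix is what lets us invoke Proposition~\ref{prop:outer} directly without the redundant-constraint construction of Section~\ref{sec:gen_polytopes}.
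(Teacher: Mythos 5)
Your proof is correct, but it takes a genuinely different route from the paper's. The paper argues in V-representation: it identifies each load as a scaled standard simplex with vertices $(E_i,0,\dots,0),\dots,(0,\dots,0,E_i)$, asserts that the Minkowski sum is again a simplex with vertices $(E_1+E_2)e_j$, and appeals to taking the convex hull of pairwise vertex sums to verify this. You instead stay in H-representation and prove the reverse inclusion $\mathbf{P}\subseteq\mathbf{P}_3$ constructively, splitting any $z\in\mathbf{P}$ proportionally as $x=\frac{E_1}{E_1+E_2}z$, $y=\frac{E_2}{E_1+E_2}z$ (with the degenerate case $E_1=E_2=0$ handled separately). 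Your argument is the cleaner and more complete of the two: the paper's proof leaves the convex-hull verification implicit and requires knowing the vertex structure, whereas your decomposition needs no vertex enumeration at all and makes the tightness mechanism explicit (the equality constraint forces the proportional split to land exactly in each $\mathbf{P}_k$). It also generalizes immediately: the same homothety argument shows $\lambda Q + \mu Q = (\lambda+\mu)Q$ for any convex set $Q$, so exactness holds for any two loads that are positive scalings of a common polytope, not just these simplices. What the paper's approach buys in exchange is the explicit V-representation of the sum, which connects to its earlier discussion of vertex-based Minkowski computation, but as a proof of exactness yours is the more rigorous one.
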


\begin{proof}
Load $i$ is a standard simplex defined by the hyperplane $x_1 + \dots + x_n = E_i$. It has $D$ vertices which are simply $\{(E_i, 0, \dots, 0), (0, E_i, \dots, 0), \dots, (0, 0, \dots, E_i)\}$.

The outer Minkowski approximation of the two loads is given by the matrices:
\[ A_{3} = \begin{bmatrix}
& & -\mathrm{I} \\
1 & 1 & \dots & 1 \\
-1 & -1 & \dots & -1 \\
\end{bmatrix}, 
b_{3} = \begin{bmatrix}
0 \\
E_1 + E_2\\
-(E_1 + E_2) \\
\end{bmatrix}. \]

The Minkowski sum retains the same structure as the base polytopes, and hence it's V-representation is simply $\{(E_1 + E_2, 0, \dots, 0),  (0, E_1 + E_2, \dots, 0), \dots, (0, 0, \dots, E_1 + E_2)\}$. The Minkowski sum is exact in this case, as can be verified by taking the convex hull of the pairwise sum of vertices from the two polytopes.
\end{proof}

This result is similar to that in\cite{nayyar_taylor}, which develops an exact storage representation for deferrable loads with only energy constraints and arbitrary arrival and departure times.

\section{Conclusions}
We have developed a technique for aggregating populations of heterogeneous loads described by polytopes. The approach is powerful because it captures a wide range of load types, is computationally tractable, and theoretically and empirically accurate in scenarios of practical interest.


We are currently developing several extensions that incorporate uncertainty, thus enabling the aggregate representation of probabilistically defined loads via similar techniques. This is important as many types of resources may be unable to exactly specify their constraints, e.g., the arrival and departure times of electric vehicles.

Finally, another important area of work concerns the problem of resource selection. While this work describes how to aggregate the available flexibility from a collection of DR resources, it does not offer specify how to allocate the bulk power into and out of an aggregation amongst individual loads. Doing so necessitates balancing considerations of equity, cost, and maintaining maximal flexibility for future time periods.

\bibliographystyle{IEEEtran}
\bibliography{SuhailBib}

\end{document}